\newtheorem{thm}{Theorem}[section]
\newtheorem{cor}[thm]{Corollary}
\newtheorem{prop}[thm]{Proposition}
\newtheorem{lem}[thm]{Lemma}
\theoremstyle{remark}
\newtheorem{rem}[thm]{Remark}
\newtheorem*{pf}{{\sl Proof}}
\newtheorem*{pfp}{{\sl Proof of Theorem \ref{abthm}}.(1)}
\newtheorem*{pfd}{{\sl Proof of Theorem \ref{abthm}}.(2)}
\newtheorem*{pfa}{{\sl Proof of Theorem \ref{ARC}}}
\theoremstyle{definition}
\newtheorem{dfn}[thm]{Definition}
\newtheorem{ex}[thm]{Example}
\numberwithin{equation}{section}
\numberwithin{thm}{section}
\def\Hom{\operatorname{Hom}}
\def\Ext{\operatorname{Ext}}
\def\mod{\operatorname{mod}}
\def\Coker{\operatorname{Coker}}
\def\m{\mathfrak m}
\def\P{\mathbb P}
\def\AB{\mathcal{AB}}
\def\p{\operatorname{P}}
\def\s{{}^{\ast}}
\def\depth{\operatorname{depth}}
\def\pdim{\operatorname{\P -dim}}
\def\pd{\operatorname{pd}}
\def\cidim{\operatorname{CI-dim}}
\def\abdim{\operatorname{AB-dim}}
\def\gdim{\operatorname{G-dim}}
\def\cmdim{\operatorname{CM-dim}}
\begin{document}



\footnote[0]{2010 {\em Mathematics Subject Classification.} 13D05, 13H10, 13D07}



\title{A Homological dimension related to AB rings}


%

\author{Tokuji Araya}
\address{Liberal Arts Division, Tokuyama College of Technology, Gakuendai, Shunan, Yamaguchi, 745-8585, Japan}

\email{araya@tokuyama.ac.jp}


%

\thanks{The author was supported by JSPS Research Activity Start-up 23840043.}

%

\begin{abstract}
There are many homological dimensions which are closely related to ring theoretic properties.
The notion of a AB ring has been introduced by Huneke and Jorgensen.
It has nice homological properties.
In this paper, we shall define a homological dimension which is closely related to a AB ring, and investigate its properties.
\end{abstract}

\keywords{Auslander condition, AB ring, AB-dimension, CI-dimension, G-dimension}

\maketitle



\section{Introduction}

Throughout this paper, let $R$ be a commutative noetherian local ring with maximal ideal $\m$ and residue field $k$.
We denote by $\mod R$ the category of finitely generated $R$-modules.
All modules considered in this paper are assumed to be finitely generated.

In commutative ring theory, there are important ring theoretic properties $\P$ with implications below:

\begin{center}
$\P$ : regular $\Rightarrow$ complete intersection $\Rightarrow$ Gorenstein $\Rightarrow$ Cohen-Macaulay.
\end{center}

Related to these properties, there are homological dimensions $\pdim_R M$ for $R$-modules $M$ with following:

\begin{center}
$
\pdim_R M : \pd_R M \ge \cidim_R M \ge \gdim_R M \ge \cmdim_R M.
$
\end{center}

The relationship between a ring theoretic property $\P$ and a related homological dimension $\pdim_R$ is the following (c.f. \cite{AB1,AB2,S} for the regular property, \cite{AGP} for the complete intersection property, \cite{ABr} for the Gorenstein property and \cite{G} for the Cohen-Macaulay property, and see also \cite{Av}):

\begin{prop}
\begin{enumerate}[\rm (1)]
\item The following conditions are equivalent.
\begin{enumerate}[\rm (i)]
\item $R$ satisfies property $\P$.
\item $\pdim_R M < \infty$ for every $R$-module $M$.
\item $\pdim_R k < \infty$.
\end{enumerate}
\item Let $M$ be a nonzero $R$-module. If $\pdim_R M < \infty$, then $\pdim_R M = \depth R - \depth M$. 
\end{enumerate}
\end{prop}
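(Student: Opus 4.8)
The plan is to prove the proposition by a case analysis over the four instances of the pair $(\P,\pdim_R)$, namely $(\text{regular},\pd_R)$, $(\text{complete intersection},\cidim_R)$, $(\text{Gorenstein},\gdim_R)$, and $(\text{Cohen--Macaulay},\cmdim_R)$. At this point in the paper there is no single axiomatic description of ``$\pdim_R$'' to argue from, so in each of the four cases I would invoke the structure theory of the dimension in question and reduce to the cited source; assembling these four reductions is the whole proof.

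For part (1), the implications (i)$\Rightarrow$(ii)$\Rightarrow$(iii) are the soft ones: (ii)$\Rightarrow$(iii) is immediate, and (i)$\Rightarrow$(ii) says that under property $\P$ the relevant dimension is finite on all of $\mod R$ --- for regular rings this is finiteness of global dimension; for a complete intersection it follows from $\hat R\cong Q/(\underline f)$ with $Q$ regular and $\underline f$ a $Q$-regular sequence, since $R\to\hat R\leftarrow Q$ is then a quasi-deformation and $\pd_Q(M\otimes_R\hat R)<\infty$ automatically; for Gorenstein rings it is \cite{ABr}; for Cohen--Macaulay rings it is \cite{G}. The content is in (iii)$\Rightarrow$(i): $\pd_R k<\infty\Rightarrow R$ regular is Auslander--Buchsbaum--Serre \cite{AB1,AB2,S}, $\cidim_R k<\infty\Rightarrow R$ complete intersection is \cite{AGP}, $\gdim_R k<\infty\Rightarrow R$ Gorenstein is \cite{ABr}, and $\cmdim_R k<\infty\Rightarrow R$ Cohen--Macaulay is \cite{G}.

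For part (2), the Auslander--Buchsbaum-type equality, I would argue by induction on $\pdim_R M$ in each case. When $\pdim_R M=0$ one has $\depth M=\depth R$: for $\pd$ because $M$ is free, and for $\gdim$, $\cidim$, $\cmdim$ because the dimension-zero modules (totally reflexive modules and their analogues) have depth equal to $\depth R$ by the basic theory of each dimension. When $\pdim_R M=n>0$, pick a short exact sequence $0\to K\to F\to M\to 0$ with $F$ free, note $\pdim_R K=n-1$, and combine the depth lemma for short exact sequences with the inductive hypothesis, disposing of $n=1$ directly. For $\cidim$ and $\cmdim$ one can instead pass through a (G-)quasi-deformation $R\to R'\leftarrow Q$ to reduce to the Auslander--Buchsbaum formula --- or its G-dimension analogue --- over the nicer ring $Q$ and then transfer depths along the flat local map $R\to R'$; this is the route taken in \cite{AGP} and \cite{G}, while \cite{ABr} handles the Gorenstein case by a direct homological argument.

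The main obstacle is precisely (iii)$\Rightarrow$(i) in the complete intersection, Gorenstein, and Cohen--Macaulay cases: none of these is a formal manipulation, and the three proofs are quite different in spirit --- deformation theory together with the complete-intersection obstruction in \cite{AGP}, Bass-number and injective-dimension estimates in \cite{ABr}, and Gerko's quasi-deformation techniques in \cite{G}. A secondary technical point is that the hypothesis ``$\pdim_R k<\infty$'' must first be turned into usable structural data, typically by feeding it into part (2) to obtain $\pdim_R k=\depth R$ and then exploiting the homological characterization of $\P$.
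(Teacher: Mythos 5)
Your proposal is correct and matches the paper's treatment: the paper offers no proof of this background proposition, but simply refers each of the four instances of $(\P,\pdim_R)$ to the cited sources (\cite{AB1,AB2,S}, \cite{AGP}, \cite{ABr}, \cite{G}), which is exactly the case-by-case reduction you carry out, with the soft implications and the Auslander--Buchsbaum-type induction filled in along the way.
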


We say that an $R$-module $M$ satisfies {\it Auslander condition (AC)} if there exists an integer $d$ such that $\Ext^{>d}_R(M,N)=0$ for all $R$-modules $N$ with $\Ext^{\gg 0}_R(M,N)=0$.
We say that $R$ is an {\it AC ring} if every $R$-modules satisfies (AC).
The notion of a AB ring has been introduced by Huneke and Jorgensen \cite{HJ}.
They define an {\it AB ring} as a Gorenstein AC ring.
They proved that every complete intersection is AB.

In this paper, we shall define AB-dimension which satisfies the following properties.

\begin{thm}\label{abthm}
\begin{enumerate}[\rm (1)]
\item The following conditions are equivalent.
\begin{enumerate}[\rm (i)]
\item $R$ is an AB ring.
\item $\abdim_R M < \infty$ for every $R$-module $M$.
\item $\abdim_R L < \infty$ for every $R$-module $L$ of finite length.
\item $R$ is Gorenstein and the class $\AB$ of $\mod R$ consisting of all modules of finite AB-dimension is closed under extension.
\end{enumerate}
\item Let $M$ be a nonzero $R$-module.
\begin{enumerate}[\rm (i)]
\item If $\abdim_R M < \infty$, then $\abdim_R M=\depth R-\depth M$.
\item $\cidim_R M \ge \abdim_R M \ge \gdim_R M$.
\end{enumerate}
\end{enumerate}
\end{thm}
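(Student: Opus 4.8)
I would prove part (2) first, since the Auslander--Buchsbaum-type equality in (2)(i) is what reduces the inequalities of (2)(ii) and most of the equivalences of (1) to finiteness statements. Recall that $\abdim_R M$ is the infimum of $\gdim_Q(R'\otimes_R M)-\gdim_Q R'$ over the admissible quasi-deformations $R\to R'\leftarrow Q$ of $M$ (those in which $R\to R'$ is flat local, $Q\to R'$ is surjective with kernel a $Q$-regular sequence, $\gdim_Q(R'\otimes_R M)<\infty$, and $R'\otimes_R M$ satisfies (AC) over $Q$), with the infimum over the empty family being $+\infty$. For (2)(i): fix an admissible quasi-deformation. Since $\gdim_Q(R'\otimes_R M)$ and $\gdim_Q R'=\pd_Q R'$ are both finite, the Auslander--Bridger formula applies to each, and subtracting the two instances the term $\depth Q$ cancels, leaving $\depth R'-\depth_{R'}(R'\otimes_R M)$ (the depth of a module annihilated by $\ker(Q\to R')$ being the same over $Q$ and over $R'$). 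The depth base-change formula for the flat local map $R\to R'$ gives $\depth R'=\depth R+\depth(R'/\m R')$ and $\depth_{R'}(R'\otimes_R M)=\depth_R M+\depth(R'/\m R')$, so the closed-fibre terms cancel and the quantity is $\depth R-\depth_R M$ for \emph{every} admissible quasi-deformation; hence $\abdim_R M=\depth R-\depth_R M$ whenever finite.

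For (2)(ii): if $\cidim_R M<\infty$, a witnessing quasi-deformation has $\pd_Q(R'\otimes_R M)<\infty$, whence $\gdim_Q(R'\otimes_R M)=\pd_Q(R'\otimes_R M)<\infty$ and $R'\otimes_R M$ satisfies (AC) over $Q$ trivially (as $\Ext^i_Q(R'\otimes_R M,-)=0$ for $i>\pd_Q(R'\otimes_R M)$); so that quasi-deformation is admissible, $\abdim_R M\le\pd_Q(R'\otimes_R M)-\pd_Q R'$, and taking the infimum $\abdim_R M\le\cidim_R M$. If $\abdim_R M<\infty$, fix an admissible quasi-deformation; since $\ker(Q\to R')$ is a $Q$-regular sequence, the change-of-rings inequality for G-dimension along a regular element, applied repeatedly, gives $\gdim_{R'}(R'\otimes_R M)<\infty$, and finite G-dimension descends along the faithfully flat local map $R\to R'$, so $\gdim_R M<\infty$; then $\abdim_R M=\gdim_R M=\depth R-\depth_R M$ by (2)(i) and Auslander--Bridger, so $\abdim_R M\ge\gdim_R M$.

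For (1): (i)$\Rightarrow$(ii) is immediate, for if $R$ is AB then the trivial quasi-deformation $R=R'=Q$ is admissible for every $M$ ($R$ is Gorenstein, so $\gdim_R M<\infty$, and $R$ is an AC ring, so $M$ satisfies (AC) over $R$), giving $\abdim_R M=\gdim_R M<\infty$. (ii)$\Rightarrow$(iii) is trivial. For (iii)$\Rightarrow$(i): by (2)(ii) every finite-length $L$ has $\gdim_R L<\infty$, so $\gdim_R k<\infty$ and $R$ is Gorenstein (characterization of Gorenstein rings by G-dimension, \cite{ABr}); and from an admissible quasi-deformation witnessing $\abdim_R L<\infty$ one transfers (AC) downward --- from $Q$ to $R'=Q/(\text{regular sequence})$ via the change-of-rings long exact sequence for $\Ext$, then from $R'$ to $R$ via faithfully flat base change for $\Ext$ --- so every finite-length module satisfies (AC) over $R$, and a reduction of the Auslander condition to finite-length test modules (in the spirit of \cite{HJ}) shows $R$ is an AC ring, hence AB. For (i)$\Leftrightarrow$(iv): $R$ AB gives $R$ Gorenstein and $\AB=\mod R$, which is extension-closed; conversely, if $R$ is Gorenstein then $\AB$ is exactly the class of modules satisfying (AC) over $R$ (by the argument just used), and this class contains $k$ (over any Gorenstein $R$, $\Ext^i_R(k,N)=0$ for $i\gg0$ forces $\id_R N<\infty$, whence $\Ext^i_R(k,N)=0$ for $i>\dim R$), so extension-closedness forces it to contain every finite-length module, and one concludes as in (iii)$\Rightarrow$(i).

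The main obstacle is the step used at the end of both implications of (1): reducing the global Auslander condition on a Gorenstein ring to its validity on finite-length modules. Everything else is bookkeeping with the Auslander--Bridger formula, the standard change-of-rings and flat-descent statements for G-dimension and for $\Ext$, and Huneke--Jorgensen's theorem that complete intersections are AB \cite{HJ}.
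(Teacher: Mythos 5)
Your proposal does not prove the theorem as stated, for two reasons. First, you have proved statements about a different invariant: you take ``$\abdim_R M$'' to be an infimum of $\gdim_Q(R'\otimes_R M)-\pd_Q R'$ over quasi-deformations satisfying an (AC)-admissibility condition, in the style of $\cidim$. The paper's AB-dimension (Definition \ref{defAB}) is intrinsic: $\abdim_R M=\sup\{\gdim_R M,\ \p_R(M)\}$, where $\p_R(M)$ is the uniform bound, over all $N$ with $\Ext^{\gg 0}_R(M,N)=0$, of the top nonvanishing degree $\p_R(M,N)$. So, for instance, your proof of (2)(i) by subtracting two Auslander--Bridger formulas over $Q$ says nothing about the paper's invariant; for that one needs the content of Lemma \ref{keylem}, namely that $\abdim_R M<\infty$ and $\Ext^{\gg 0}_R(M,N)=0$ force $\p_R(M,N)=\depth R-\depth M$, which the paper proves via the finite projective hull $0\to M\to Y^M\to X^M\to 0$ and a syzygy-shifting argument ruling out $\p_R(M,N)>\gdim_R M$. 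Similarly, your (2)(ii) would yield $\abdim=\gdim$ whenever finite, which is false for the paper's invariant (and indeed contradicts the Jorgensen--\c{S}ega example in the paper, a totally reflexive $M$ with $\abdim_R M=\infty$); with the correct definition, $\abdim_R M\ge\gdim_R M$ is immediate from the definition, and $\cidim_R M\ge\abdim_R M$ follows from the uniform bound $\p_R(M,N)=\depth R-\depth M$ for modules of finite CI-dimension (\cite{AY}), not from exhibiting an admissible quasi-deformation.

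Second, even setting the definition aside, the genuinely hard step of part (1) is exactly the one you leave unproved: passing from ``every finite-length module has finite AB-dimension'' to ``every module does.'' You defer this to ``a reduction of the Auslander condition to finite-length test modules (in the spirit of \cite{HJ}),'' but that reduction is the theorem's content. The paper carries it out in two moves: for a maximal Cohen--Macaulay module $X$, cut down by a maximal $X$-regular sequence and use $\abdim_R X/(x_1,\dots,x_t)X=\abdim_R X+t$ (Proposition \ref{prop}(2)) to conclude $\abdim_R X<\infty$ from the finite-length hypothesis; for an arbitrary $M$, use a Cohen--Macaulay approximation $0\to Y_M\to X_M\to M\to 0$ together with Lemma \ref{ABfin} (finiteness of $\p_R$ passes across short exact sequences when one term has finite projective dimension) to transfer finiteness from $X_M$ to $M$. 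Without these ingredients (or a substitute for them), (iii)$\Rightarrow$(ii) and (iv)$\Rightarrow$(iii)$\Rightarrow$(ii) are not established; your (i)$\Rightarrow$(ii), (ii)$\Rightarrow$(iii), and (i)$\Rightarrow$(iv) directions are the easy ones in either framework.
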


In representation theory of finite dimensional algebras, there are many homological conjectures.
The Auslander condition is closely related to these conjectures (see \cite{H}).
The Auslander-Reiten conjecture (ARC) below is one of these conjectures.

\medskip

\noindent
(ARC) \quad
Let $R$ be a (not necessarily commutative) noetherian ring and $M$ be a finitely generated $R$-module.
If $\Ext^{>0}_R(M,M\oplus R) = 0$, then $M$ is projective.

\medskip

Christensen and Holm \cite{CH} proved that a (not necessarily commutative) AC ring satisfies (ARC) .
In the end of this paper, we will prove the following theorem.

\begin{thm}\label{ARC}
Let $M$ be an $R$-module which has finite AB-dimension.
If $\Ext^{>0}_R(M,M)=0$,
then $M$ is free.
\end{thm}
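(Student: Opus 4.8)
The plan is to reduce to the elementary fact that a nonzero module of finite projective dimension with no self-extensions is free, by first replacing $M$ with a sufficiently high syzygy. We may assume $M\neq 0$. The elementary input is this: if $X\neq 0$ is a module over a local ring $S$ with $\pd_S X=p<\infty$ and $\Ext^{p}_S(X,X)=0$, then in the minimal free resolution $0\to F_p\to\cdots\to F_0\to X\to 0$ the last differential has entries in $\m_S$, so applying $\Hom_S(-,X)$ forces $\Hom_S(F_p,X)=\m_S\Hom_S(F_p,X)$, whence $F_p=0$ by Nakayama and $p=0$; thus $X$ is free. Consequently it suffices to prove that $\pd_R M<\infty$.

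First I would set $g:=\gdim_R M$, which is finite by Theorem \ref{abthm}(2)(ii), so that $\Ext^{>g}_R(M,R)=0$. Dimension-shifting along the syzygies $\Omega^j M$ of $M$, using $\Ext^{>g}_R(M,F)=0$ for free $F$ together with $\Ext^{>0}_R(M,M)=0$, gives $\Ext^n_R(M,\Omega^j M)\cong\Ext^{n-j}_R(M,M)=0$ for all $n\ge g+j+1$; in particular $\Ext^{\gg 0}_R(M,\Omega^j M)=0$ for every $j\ge 0$. The role of the hypothesis $\abdim_R M<\infty$ is now to furnish a uniform Auslander bound for $M$: an integer $d$ with $\Ext^{>d}_R(M,X)=0$ for every module $X$ satisfying $\Ext^{\gg 0}_R(M,X)=0$. (This Auslander condition for $M$ should follow from finite AB-dimension — presumably it is among the basic properties established when AB-dimension is introduced — in the same way that finite CI-dimension implies it: pass to a quasi-deformation $R\to R'\leftarrow Q$ with $Q$ an AB ring, where the Auslander condition holds by definition, and transfer the bound back along the complete-intersection surjection $Q\twoheadrightarrow R'$.) Taking $X=N:=\Omega^m M$ for a fixed $m\ge\max(g,d)$, and using $\Ext^i_R(N,N)\cong\Ext^{i+m}_R(M,N)$ for $i\ge 1$, I would obtain $\Ext^{>0}_R(N,N)=0$; since $m\ge g$ the module $N$ is totally reflexive, so also $\Ext^{>0}_R(N,R)=0$, hence $\Ext^{>0}_R(N,N\oplus R)=0$.

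Finally, $N$ is a syzygy of $M$ and so still has finite AB-dimension. Invoking the theorem of Christensen and Holm \cite{CH} that an AC ring satisfies (ARC) — applied after reducing to the case of an AB (hence AC) ground ring by way of the quasi-deformation and descending along the faithfully flat map $R\to R'$ — I would conclude that $N$ is free. Then $\pd_R M\le m<\infty$, and by the elementary input $M$ is free.

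The delicate part, I expect, is the transfer through the quasi-deformation in the last two steps. One needs both that finite AB-dimension is inherited by syzygies and that it yields a single Auslander bound valid simultaneously for all of $\Omega^0M,\Omega^1M,\dots$; these ought to be routine consequences of the definition of AB-dimension, but they are exactly what legitimizes the uniformity in $m$. More seriously, the freeness conclusion cannot be reached by the naive shortcut of checking that the lift $M'=R'\otimes_R M$ has small projective dimension over $Q$ and reading off $R'$-freeness: the quotient $R'=Q/(\underline x)$ need not inherit finiteness of projective dimension from $Q$, so $\pd_Q M'<\infty$ does not by itself imply $M'$ is $R'$-free. The descent therefore has to be carried out at the level of the Auslander condition (equivalently, of Tate cohomology), using the change-of-rings operators attached to the surjection $Q\twoheadrightarrow R'$; this bookkeeping is where I expect the real work to lie, everything else being formal.
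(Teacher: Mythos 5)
Your reduction to a high syzygy is sound, and the facts you need about AB-dimension are cheaper than you guessed: in this paper $\abdim_RM=\sup\{\gdim_RM,\p_R(M)\}$ (Definition \ref{defAB}), so finiteness of AB-dimension literally \emph{is} finite G-dimension together with a uniform Auslander bound $d=\p_R(M)$ for the single module $M$; no quasi-deformation enters the definition at all. That, however, is exactly what undermines your final step. To conclude that $N=\Omega^mM$ is free from $\Ext^{>0}_R(N,N\oplus R)=0$ you invoke the Christensen--Holm theorem \cite{CH}, which requires the ground ring to satisfy (AC), and you propose to reach such a ring ``by way of the quasi-deformation and descending along $R\to R'$.'' No such quasi-deformation exists here: finite AB-dimension of one module imposes no hypothesis on $R$, and in Theorem \ref{ARC} the ring is not assumed Gorenstein, AC, or AB. So the appeal to \cite{CH}, and the faithfully flat descent you build on it, has no foundation; the freeness of $N$ --- the crux of the whole argument --- is left unproved. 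This is a genuine gap, not just deferred bookkeeping.

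The irony is that your own setup already closes it without \cite{CH}. You proved $\Ext^{\gg 0}_R(M,\Omega^jM)=0$ for every $j\ge 0$, so every syzygy $\Omega^jM$ lies in $M^{\perp}$ and the uniform bound $d$ applies to all of them simultaneously. Taking $j=m+1$ with $m\ge\max(g,d)$ gives $\Ext^1_R(N,\Omega N)\cong\Ext^{m+1}_R(M,\Omega^{m+1}M)=0$, so the sequence $0\to\Omega N\to F\to N\to 0$ splits, $N$ is free, $\pd_RM\le m<\infty$, and your elementary Nakayama argument finishes the proof. For comparison, the paper's route is shorter still and avoids high syzygies entirely: Lemma \ref{keylem} states that if $\abdim_RM<\infty$ and $\p_R(M,N)<\infty$ then $\p_R(M,N)=\depth R-\depth M$; applied with $N=M$ it forces $\depth R-\depth M=0$, and applied with $N=\Omega M$ (which lies in $M^{\perp}$ by your own dimension shift) it gives $\Ext^1_R(M,\Omega M)=0$, so already the first syzygy sequence $0\to\Omega M\to F_0\to M\to 0$ splits and $M$ is free.
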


\section{AB-dimension}

In this section, we shall define AB-dimension and investigate some properties.

Let $M$ be an $R$-module.
Let
$$
\cdots \to F_n \overset{\partial _n}{\to} F_{n-1} \to \cdots \to F_1 \overset{\partial _1}{\to} F_0 \to M \to 0
$$
be a minimal free resolution of $M$.
The $n$th {\em syzygy} module $\Omega^nM$ of $M$ is defined as the image of the map $\partial_n$.
We denote the first syzygy module $\Omega M$ for simple.
Note that the $n$th syzygy module is uniquely determined up to isomorphism.

We recall the definition of G-dimension.

\begin{dfn}\cite{ABr}
Let $M$ be an $R$-module.
$M$ is called {\it totally reflexive} if $M\cong M\s\s$ and if $\Ext^{>0}_R(M,R)=\Ext^{>0}_R(M\s,R)=0$.
Here, $(-)\s=\Hom_R(-,R)$.
We say that the G-dimension of $M$ is at most $n$ if the $n$th syzygy module $\Omega ^n M$ is totally reflexive.
\end{dfn}

We give some notations to define an AB-dimension.

\begin{dfn}\label{defAB}
Let $M$ be an $R$-modules.
\begin{enumerate}[\rm (1)]
\item For an $R$-module $N$, we set $\p _R(M,N) := \sup \{ \ n\ |\ \Ext^n_R(M,N) \not= 0\ \}$.
\item We denote by $M^{\perp}$ the full sub category of $\mod R$ consisting of all $R$-modules $N$ with $\Ext^{\gg 0}_R(M,N)=0$.
\item We set $\p _R(M) := \sup \{\ \p _R(M,N)\ |\ N \in M^{\perp}\ \}$.
\item We define $\abdim _RM := \sup \{\ \gdim _RM,\p _R(M)\ \}$.
\end{enumerate}
\end{dfn}

There are some remarks.

\begin{rem}\label{rem}
\begin{enumerate}[\rm (1)]
\item If $\pd _RM<\infty$, then $M^{\perp}=\mod R$.
For any nonzero $R$-module $N$, one can check that $\p _R(M,N)=\pd_R M=\depth R - \depth M$ by Nakayama's lemma (c.f. \cite[Theorem 4.2]{AY}).
Therefore we have $\p _R(M)=\depth R - \depth M$.
\item Let $0 \to M_1 \to M_2 \to M_3 \to0$ be an exact sequence and let $(i,j,l)$ be a permutation of $(1,2,3)$.
Then one can easily check $M_i^{\perp} \cap M_j^{\perp} \subset M_l^{\perp}$.
In particular, if $\pd_RM_i<\infty$, then $M_j^{\perp}=M_l^{\perp}$.
\item If either $M$ or $N$ is zero, then $\{ \ n\ |\ \Ext^n_R(M,N) \not= 0\ \}$ is empty.
In this case, we define $\p _R(M,N)=-\infty$.
In particular, $\p _R(0)$ is $-\infty$.
If $M^{\perp}=\{ 0\}$, then $\p _R(M)$ is also $-\infty$.
\item If there exists a nonzero $R$-module $N$ in $k^{\perp}$, then $N$ has finite injective dimension and therefore $R$ must be Cohen-Macaulay by Bass' conjecture \cite{PS, Ho, R}.
Thus if $R$ is not Cohen-Macaulay, then we have $\p _R(k)=-\infty$.
On the other hand, if $R$ is Cohen-Macaulay, then there exists nonzero $R$-module $I$ of finite injective dimension.
Since $\p _R(M,I)=\depth R - \depth M$ for any nonzero $R$-module $M$ (c.f. \cite[Exercises 3.1.24]{BH}), we have $\p _R(M) \geq \depth R - \depth M \geq 0$ and $\p _R(k)=\depth R$.
In both case, we see that the residue field $k$ always satisfies (AC).
Therefore we have $\abdim_Rk<\infty$ if and only if $R$ is Gorenstein.
\end{enumerate}
\end{rem}

Let $M$ be an $R$-module.
If $\gdim_R M < \infty$, then there exist following exact sequences which are called {\it Cohen-Macaulay approximation} and {\it finite projective hull} respectively (c.f. \cite[Theorem 1.1]{ABu}):

$$
0 \to Y_M \to X_M \to M \to 0,
$$
$$
0 \to M \to Y^M \to X^M \to 0,
$$
where $Y_M$ and $Y^M$ have finite projective dimensions, and $X_M$ and $X^M$ are totally reflexive.

\begin{lem}\label{ABfin}
Let $0 \to M_1 \to M_2 \to M_3 \to $ be an exact sequence and let $(i,j,l)$ be a permutation of $(1,2,3)$.
Assume $\pd _RM_i<\infty$.
Then $\p _R(M_j)<\infty$ if and only if $\p _R(M_l)<\infty$.
In particular, let $0 \to Y_M \to X_M \to M \to 0$ and $0 \to M \to Y^M \to X^M \to 0$ be a Cohen-Macaulay approximation and a finite projective hull respectively respectively, then the finiteness of AB-dimension of $X_M$, AB-dimension of $M$ and AB-dimension of $X^M$ are coincide.
\end{lem}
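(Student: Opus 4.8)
The plan is to reduce the statement to the long exact sequence of $\Ext$ together with Remark~\ref{rem}(2). I would write the given exact sequence as $0\to M_1\to M_2\to M_3\to 0$ and set $p:=\pd_R M_i<\infty$. The first step is to note that, since $\pd_R M_i<\infty$, Remark~\ref{rem}(2) gives $M_j^{\perp}=M_l^{\perp}$; I would denote this common full subcategory by $\mathcal C$, so that $\p_R(M_j)=\sup\{\p_R(M_j,N)\mid N\in\mathcal C\}$ and likewise with $l$ in place of $j$. Hence it suffices to compare these two suprema, and the crux will be that the comparison holds \emph{uniformly} in $N$.

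For the second step, I would fix $N\in\mathcal C$, apply $\Hom_R(-,N)$ to the short exact sequence, and feed the vanishing $\Ext^{>p}_R(M_i,N)=0$ into the resulting long exact sequence. A short inspection, split into the cases $i\in\{1,3\}$ and $i=2$, shows that for every $n>p+1$ one gets an isomorphism $\Ext^n_R(M_j,N)\cong\Ext^n_R(M_l,N)$ when $M_i$ is an end term, and an isomorphism $\Ext^n_R(M_1,N)\cong\Ext^{n+1}_R(M_3,N)$ when $M_i=M_2$ is the middle term. In either case $\p_R(M_j,N)$ and $\p_R(M_l,N)$ each bound the other up to an additive constant that depends only on $p$ (namely $p+1$ or $p+2$), not on $N$. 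Taking the supremum over $N\in\mathcal C$ then yields $\p_R(M_j)<\infty$ if and only if $\p_R(M_l)<\infty$.

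For the ``in particular'' assertion, the third step is to apply what has just been proved to the two approximation sequences: to $0\to Y_M\to X_M\to M\to 0$ with $M_i=Y_M$ of finite projective dimension, which gives $\p_R(X_M)<\infty\iff\p_R(M)<\infty$, and to $0\to M\to Y^M\to X^M\to 0$ with $M_i=Y^M$ of finite projective dimension, which gives $\p_R(M)<\infty\iff\p_R(X^M)<\infty$. Since $X_M$ and $X^M$ are totally reflexive, $\gdim_R X_M$ and $\gdim_R X^M$ are at most $0$, while $\gdim_R M<\infty$ because the two approximation sequences exist; therefore for each of $X_M$, $M$, $X^M$ the quantity $\abdim_R(-)=\sup\{\gdim_R(-),\p_R(-)\}$ is finite precisely when $\p_R(-)$ is, and the three finiteness statements coincide.

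I expect the only genuine obstacle to be the bookkeeping in the second step: one must check that passing to the suprema over $\mathcal C$ is legitimate, i.e. that the $\Ext$-comparison holds with a bound independent of $N$. This is exactly what finiteness of $\pd_R M_i$ buys, as it forces $\Ext^{>p}_R(M_i,-)$ to vanish uniformly, while the identification $M_j^{\perp}=M_l^{\perp}$ from Remark~\ref{rem}(2) is what makes the two suprema range over the same index category. The degenerate cases (some $M_t=0$, or $\mathcal C=\{0\}$) are immediate from the conventions recorded in Remark~\ref{rem}(3).
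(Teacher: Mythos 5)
Your proposal is correct and follows essentially the same route as the paper: identify $M_j^{\perp}=M_l^{\perp}$ via Remark~\ref{rem}(2), apply $\Hom_R(-,N)$ and use the vanishing of $\Ext^{>\pd_R M_i}_R(M_i,N)$ to bound $\p_R(M_j,N)$ and $\p_R(M_l,N)$ against each other by a constant independent of $N$, then take suprema; the paper packages this as the single estimate $\p_R(M_l,N)\leq\max\{\pd_R M_i+1,\ \p_R(M_j,N)+1\}$ rather than your case analysis, but the content is identical, and your handling of the ``in particular'' statement matches the intended reduction to finiteness of $\p_R(-)$.
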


\begin{pf}
Assume $\p _R(M_j)<\infty$.
Note that $M_j^{\perp}=M_l^{\perp}$ by Remark \ref{rem}.(2).
Let $N$ be a nonzero $R$-module in $M_l^{\perp}$.
Note that $\p _R(M_i,N)=\pd_R M_i < \infty$  by Remark \ref{rem}.(1).
Applying $\Hom_R (-,N)$ to $0\to M_1 \to M_2 \to M_3 \to 0$, 
we have $\p _R(M_l,N) \leq \max \{\ \p _R(M_i,N)+1, \p _R(M_j,N)+1\ \} \leq \max \{\ \pd_R M_i+1, \p _R(M_j)+1\ \}$.
This yields $\p _R(M_l) \leq \max \{\ \pd_R M_i+1, \p _R(M_j)+1\ \}<\infty$.
\qed
\end{pf}

The following gives a more strong statement than Theorem \ref{abthm}.(2).(i).

\begin{lem}\label{keylem}
Let $M$ and $N$ be nonzero $R$-modules.
Assume $\abdim_RM<\infty$.
If $\p_R(M,N)<\infty$,
then $\p_R(M,N)=\depth R-\depth M$.
\end{lem}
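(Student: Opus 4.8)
The goal is to compute $\p_R(M,N)$ under the hypotheses $\abdim_R M<\infty$ (so in particular $\gdim_R M<\infty$) and $\p_R(M,N)<\infty$ for nonzero $M,N$. The strategy is to reduce to the case where $M$ is totally reflexive by using the Cohen--Macaulay approximation $0\to Y_M\to X_M\to M\to 0$ from \cite[Theorem 1.1]{ABu}. Since $Y_M$ has finite projective dimension and $X_M$ is totally reflexive, applying $\Hom_R(-,N)$ to this sequence relates $\p_R(M,N)$, $\p_R(X_M,N)$ and $\p_R(Y_M,N)$. By Remark \ref{rem}.(1), $\p_R(Y_M,N)=\pd_R Y_M=\depth R-\depth Y_M$ once we know $N\ne 0$, and by Remark \ref{rem}.(2) we have $N\in X_M^\perp$ iff $N\in M^\perp$ iff $N\in Y_M^\perp$; so $\p_R(X_M,N)<\infty$ as well, and Lemma \ref{ABfin} already tells us the finiteness conditions match up. The key point will be that $\depth X_M=\depth M$ (the approximation preserves depth, which follows from the depth lemma applied to the exact sequence since $\pd_R Y_M<\infty$ forces $\depth Y_M\ge\depth X_M$ when $\depth X_M<\depth M$, etc.), so it suffices to prove the Lemma when $M=X_M$ is totally reflexive.

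So assume now $M$ is totally reflexive and nonzero. Then $\depth M=\depth R$ (a totally reflexive module is maximal Cohen--Macaulay when $R$ is Cohen--Macaulay; more generally $\depth M=\depth R$ for totally reflexive $M$), so we must show $\p_R(M,N)=0$, i.e. that $\Ext^i_R(M,N)=0$ for all $i>0$ whenever these groups vanish for $i\gg 0$. This is exactly a rigidity-type statement: a totally reflexive module over a ring all of whose totally reflexive modules have finite AB-dimension should have no ``gaps'' and no tail of $\Ext$. The mechanism is the Auslander condition: the hypothesis $\abdim_R M<\infty$ gives a uniform bound $d=\p_R(M)$ such that $\Ext^{>d}_R(M,N')=0$ for every $N'\in M^\perp$. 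The plan is to feed syzygies of $N$ (or of $M$) into this bound. Concretely, if $\Ext^{>s}_R(M,N)=0$ but $\Ext^s_R(M,N)\ne 0$ for some $s\ge 1$, one dimension-shifts: $\Ext^s_R(M,N)\cong\Ext^{s+t}_R(M,\Omega^{-t}\!\cdots)$ is not directly available, so instead shift on the first variable using total reflexivity — $\Ext^i_R(M,N)\cong\Ext^{i+1}_R(\Omega^{-1}M,N)$ is again not standard, so the cleanest route is $\Ext^i_R(M,N)\cong\Ext^i_R(R,N')=0$ type moves via the transpose, or: replace $N$ by a high syzygy $\Omega^m N$, note $\Ext^i_R(M,\Omega^m N)\cong\Ext^{i+m}_R(M,N)$ for $i\ge 1$, and since $\Ext^{\gg0}_R(M,N)=0$ we get $\Omega^m N\in M^\perp$ for large $m$; then the bound $d$ applied to $\Omega^m N$ combined with the isomorphism forces $\Ext^{>d-m}_R(M,N)=0$, and letting $m$ grow kills everything above degree $0$.

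**The main obstacle.** The subtle point is the very last step: pushing the vanishing all the way down to degree $1$. The shifting argument $\Ext^i_R(M,\Omega^m N)\cong\Ext^{i+m}_R(M,N)$ is only valid for $i\ge 1$, so it lets us conclude $\Ext^j_R(M,N)=0$ for $j$ in a shrinking-from-above window, and as $m$ increases this window's lower endpoint $\max\{1,d-m+1\}$ eventually becomes $1$ — so in fact this does reach degree $1$, provided we can genuinely take $m$ as large as we like while keeping $\Omega^m N\in M^\perp$, which holds because $\Ext^{\gg0}_R(M,N)=0$ is part of the hypothesis $\p_R(M,N)<\infty$. Thus the real work is just bookkeeping with these indices; the one place to be careful is ensuring $d=\p_R(M)$ is genuinely finite and uniform (it is, by $\abdim_R M<\infty$) and that $N\ne 0$ and its syzygies are nonzero so that no degenerate $-\infty$ cases intrude. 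I expect the depth-preservation of the Cohen--Macaulay approximation and this index chase to be the only genuine content; everything else is assembling Remarks \ref{rem} and Lemma \ref{ABfin}.
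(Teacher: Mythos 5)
There is a genuine gap, and it sits exactly at the step you call ``the key point'': the claim $\depth X_M=\depth M$ for the Cohen--Macaulay approximation $0\to Y_M\to X_M\to M\to 0$ is false. Since $X_M$ is totally reflexive, the Auslander--Bridger formula gives $\depth X_M=\depth R$, so $\depth X_M\neq\depth M$ whenever $t:=\depth R-\depth M>0$ --- which is precisely the nontrivial case of the lemma. (Your parenthetical depth-lemma reasoning cannot rescue this; the depth lemma applied to that sequence gives $\depth Y_M=\depth M+1$ when $t\ge1$, not depth preservation.) Consequently your ``reduction to the totally reflexive case'' only proves the statement when $\depth M=\depth R$, i.e. $t=0$: in the reduced case you show $\p_R(M,N)\le 0$, but nothing in the proposal produces the lower bound $\Ext^t_R(M,N)\neq0$ (nor even the upper bound $\p_R(M,N)\le t$) for general $M$ with $t>0$. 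To pass from the totally reflexive case back to $M$ via the long exact sequence you would have to control $\p_R(Y_M,N)=\pd_R Y_M=t-1$ and handle the edge case $t=1$, and none of this is done. The paper's proof supplies exactly this missing half: the upper bound $\p_R(M,N)\le t$ comes from the syzygy-shift argument applied directly to $M$ (using $\p_R(M,R)=\gdim_RM=t$, so $\Ext^i_R(M,N)\cong\Ext^{i+1}_R(M,\Omega N)$ for $i>t$, and unbounded $\p_R(M,\Omega^nN)$ would contradict $\p_R(M)<\infty$), while the lower bound uses the \emph{finite projective hull} $0\to M\to Y^M\to X^M\to 0$ rather than the Cohen--Macaulay approximation: there $\depth Y^M=\depth M$ (this is where the depth lemma is correctly applicable), so $\p_R(Y^M,N)=\pd_RY^M=t$, while $\p_R(X^M,N)=0$ by the $t=0$ case together with Lemma \ref{ABfin} and Remark \ref{rem}.(2), and the long exact sequence yields $\Ext^t_R(M,N)\cong\Ext^t_R(Y^M,N)\neq0$.

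Two smaller points. Your shifting isomorphism is stated backwards: it should read $\Ext^{i+m}_R(M,\Omega^mN)\cong\Ext^i_R(M,N)$ for $i\ge1$ (for $M$ totally reflexive), not $\Ext^i_R(M,\Omega^mN)\cong\Ext^{i+m}_R(M,N)$; your subsequent bookkeeping matches the correct version, so this reads as a typo, and indeed this mechanism is essentially the paper's upper-bound argument. Also, in the reduced case you assert the target is $\p_R(M,N)=0$ but never address why $\p_R(M,N)\ge0$, i.e. $\Hom_R(M,N)\neq0$; the paper is equally terse there, but be aware that this inequality is part of the claimed equality.
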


\begin{pf}
Put $t=\depth R-\depth M$.
Since $\gdim _RM=t$, we have $\p_R(M,R)=t$.
If $t<\p_R(M,N)$, then we have $\p_R(M,\Omega^nN)=\p_R(M,N)+n < \infty$ for every $n \ge 0$.
This contradicts to $\p_R(M)<\infty$.
Thus $\p_R(M,N) \leq t$.

If $t=0$, then $0 \leq \p_R(M,N)\leq t=0$.
Assume $t>0$.
Let $0 \to M \to Y^M \to X^M \to 0$ be a finite projective hull of $M$.
By depth lemma, we see $\depth Y^M=\depth M$.
Since $\pd _RY^M<\infty$, we have $\p_R(Y^M,N)=\pd _RY^M=t<\infty$.
It follows from Lemma \ref{ABfin}, $\abdim_R X^M<\infty$.
Since $\depth X^M=\depth R$, we have $\p_R(X^M,N)=0$.
Applying $\Hom_R(-,N)$ to $0 \to M \to Y^M \to X^M \to 0$, we get $\p_R(M,N)=t$.
\qed
\end{pf}

Now we can prove Theorem \ref{abthm}.(2).

\begin{pfd}
(i) is clear by Lemma \ref{keylem}.
To prove (ii), we assume $\cidim _RM<\infty$.
We remark that it is enough to show $\abdim _RM<\infty$ by (i).
It comes from \cite[Theorem (1.4)]{AGP}, we have $\gdim _RM< \infty$.
For any nonzero $R$-module $N$ with $\p_R(M,N)<\infty$, we have $\p_R(M,N)=\depth R-\depth M$ by \cite[Theorem 4.2]{AY}.
This yields $\p_R(M)=\depth R-\depth M<\infty$ and we see $\abdim _RM <\infty$.
\qed
\end{pfd}

We give some fundamental properties of AB-dimension.

\begin{prop}\label{prop}
Let $M$ be a nonzero $R$-module.
The followings hold:
\begin{enumerate}[\rm (1)]
\item $\abdim_R\Omega M=\sup \{ \abdim _RM-1,0\}$.
\item $\abdim _RM/xM=\abdim _RM+1$ for any $M$-regular element $x \in \m$.
\item $\abdim _{R/xR}M/xM=\abdim _RM$ for any $M$- and $R$-regular element $x \in \m$.
\end{enumerate}
\end{prop}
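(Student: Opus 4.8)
The plan is to prove the three formulas by separately controlling the G-dimension term and the $\p_R(-)$ term in $\abdim_R(-) = \sup\{\gdim_R(-), \p_R(-)\}$, since for G-dimension each of these statements is already classical (syzygy, Auslander--Bridger, base change). So the work is entirely on the behaviour of $\p_R(-)$, and the main tool throughout is the long exact sequence in $\Ext$ together with Remark~\ref{rem} and Lemma~\ref{keylem}.

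For part (1), I would use the short exact sequence $0 \to \Omega M \to F_0 \to M \to 0$ with $F_0$ free. First observe $\Omega M^{\perp} = M^{\perp}$ by Remark~\ref{rem}.(2), since $\pd_R F_0 = 0 < \infty$. Then for any nonzero $N \in M^{\perp}$, applying $\Hom_R(-,N)$ and using $\Ext^{>0}_R(F_0,N)=0$ gives $\Ext^i_R(\Omega M, N) \cong \Ext^{i+1}_R(M,N)$ for all $i \ge 1$, hence $\p_R(\Omega M, N) = \p_R(M,N) - 1$ whenever $\p_R(M,N) \ge 1$, and $\p_R(\Omega M, N) \le 0$ otherwise. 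If $\abdim_R M < \infty$, then by Lemma~\ref{keylem} every finite $\p_R(M,N)$ equals $t := \depth R - \depth M$, and $\depth \Omega M = t+1$ (or $\Omega M = 0$ in the regular/free case, which is the boundary), so taking suprema yields $\p_R(\Omega M) = \sup\{\p_R(M)-1, 0\}$ — here one must be slightly careful about the case $M^{\perp} = \{0\}$ and about whether $\Omega M$ is itself zero or nonzero, but these are degenerate cases that fall out by the conventions in Remark~\ref{rem}.(3). Combined with the classical fact $\gdim_R \Omega M = \sup\{\gdim_R M - 1, 0\}$, the formula follows. If $\abdim_R M = \infty$, then I would argue that $\abdim_R \Omega M = \infty$ as well, again via the isomorphism of $\Ext$ modules, so both sides are infinite.

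For part (2), I would use the short exact sequence $0 \to M \xrightarrow{x} M \to M/xM \to 0$. The key point is that $\Ext^i_R(M/xM,N)$ fits in a long exact sequence relating it to $\Ext^i_R(M,N)$ and $\Ext^{i-1}_R(M,N)$, and that multiplication by $x$ on these modules interacts with the connecting maps; since $x \in \m$, Nakayama-type arguments show that $\p_R(M/xM,N) = \p_R(M,N) + 1$ for $N$ on which the relevant vanishing holds, so the $\p$ terms shift by one. For the G-dimension side, $\gdim_R M/xM = \gdim_R M + 1$ is the standard behaviour of G-dimension modulo a regular element (it is essentially the analogue for G-dimension of $\pd_R M/xM = \pd_R M + 1$, and follows since $M/xM$ has $M$ as its first syzygy up to free summands when $\gdim_R M < \infty$; when $\gdim_R M = \infty$ both sides are infinite). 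One should also check $M/xM$ is nonzero, which holds by Nakayama since $x \in \m$.

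For part (3), this is a base-change statement: passing from $R$ to $R/xR$ with $x$ both $R$-regular and $M$-regular. The G-dimension part, $\gdim_{R/xR} M/xM = \gdim_R M$, is a known result of Auslander--Bridger type. For the $\p$ part, I would use the standard change-of-rings isomorphism $\Ext^i_{R/xR}(M/xM, N) \cong \Ext^i_R(M, N)$ for any $R/xR$-module $N$ (valid since $x$ is $M$-regular, via the resolution of $M/xM$ over $R/xR$ obtained by reducing a free resolution of $M$), which immediately identifies $(M/xM)^{\perp}$ over $R/xR$ with the subcategory of $M^{\perp}$ consisting of $R/xR$-modules, and matches $\p_{R/xR}(M/xM, N)$ with $\p_R(M,N)$. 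Then I need that the supremum over this smaller class of modules still computes $\p_R(M)$; this is where Lemma~\ref{keylem} is essential, since it says the finite value of $\p_R(M,N)$ is always the constant $\depth R - \depth M = \depth R/xR - \depth M/xM$, independent of $N$, so restricting to $R/xR$-modules $N$ (and there is at least one, e.g. when $\p_R(M)$ is finite and nonnegative we can find suitable $N$ in $k^{\perp}$-type arguments, or $R/xR$ itself) does not change the supremum.

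The main obstacle I anticipate is the bookkeeping in part (1) and part (3) of verifying that the suprema of $\p_R(-,N)$ over the relevant $\perp$-categories behave as claimed when the category is reduced or shifted — in particular handling the degenerate cases ($M^{\perp} = \{0\}$, $\Omega M = 0$, or $\abdim_R M = \infty$) uniformly, and in part (3) confirming that enough $R/xR$-modules lie in $M^{\perp}$ to recover the full supremum. Lemma~\ref{keylem}, which collapses every finite value of $\p_R(M,N)$ to the single number $\depth R - \depth M$, is precisely what makes these supremum manipulations trivial once one module realizing the value is found, so the real content is locating such a witness module after each operation, and this is where I would spend the most care.
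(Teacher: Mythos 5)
Your parts (1) and (2) are correct and essentially identical to the paper's argument: the paper reduces to the case $\gdim_RM<\infty$ by quoting Auslander--Bridger for the G-dimension equalities and then records exactly the shift formulas $\p_R(\Omega M,N)=\sup\{\p_R(M,N)-1,0\}$ and $\p_R(M/xM,N)=\p_R(M,N)+1$ that you derive from the long exact sequences and Nakayama. The problem is part (3), where your plan has a genuine gap. The equality $\abdim_{R/xR}M/xM=\abdim_RM$ contains two implications, and the hard one is: if $\p_{R/xR}(M/xM)<\infty$ then $\p_R(M)<\infty$. At that point your appeal to Lemma \ref{keylem} over $R$ is circular: its hypothesis is $\abdim_RM<\infty$, which (once $\gdim_RM<\infty$ is assumed) is precisely the statement $\p_R(M)<\infty$ that you are trying to prove. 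Consequently the assertion that ``restricting the supremum to $R/xR$-modules does not change it because every finite value of $\p_R(M,N)$ equals $\depth R-\depth M$'' is not available in that direction, and the difficulty is not, as you suggest, locating a witness module; it is producing a uniform bound on $\p_R(M,N)$ for an arbitrary $R$-module $N\in M^{\perp}$, on which $x$ need not act as a regular element and which the change-of-rings isomorphism (valid only for modules killed by $x$) does not see.

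The paper supplies exactly this missing reduction. Given $N$ with $\p_R(M,N)<\infty$: if $x$ is $N$-regular, the long exact sequence of $0\to N\overset{x}{\to}N\to N/xN\to 0$ together with Nakayama gives $\p_R(M,N)=\p_R(M,N/xN)$; then the isomorphism $\Ext^i_R(M,N/xN)\cong\Ext^i_{R/xR}(M/xM,N/xN)$ lets one apply Lemma \ref{keylem} over $R/xR$ --- where the finiteness hypothesis $\abdim_{R/xR}M/xM<\infty$ \emph{is} available --- to conclude $\p_R(M,N)=\depth R/xR-\depth M/xM=\depth R-\depth M$. If $x$ is not $N$-regular, one passes to $\Omega N$ (on which $x$ is regular, since $x$ is $R$-regular and $\Omega N$ embeds in a free module), applies the previous case to get $\p_R(M,\Omega N)=\depth R-\depth M$, and then uses $\p_R(M,R)=\depth R-\depth M$ and the syzygy sequence to bound $\p_R(M,N)\le\depth R-\depth M$. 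This yields the uniform bound $\p_R(M)\le\depth R-\depth M<\infty$. Your proposal would be complete if you added this two-case argument (and correspondingly used Lemma \ref{keylem} only over $R/xR$ in that direction); as written, (3) does not go through.
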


\begin{proof}
We remark that Auslander and Bridger \cite{ABr} proved each equalities for G-dimension.
Thus we may assume $\gdim_RM<\infty$.
In this situation, it is enough to show that we only check the Auslander condition (AC).

(1) and (2) are Trivial. Indeed, for any $R$-module $N$, we can check that $\p_R(\Omega M,N)=\sup \{ \p_R(M,N)-1,0\}$ and $\p_R(M/xM,N)=\p_R(M,N)+1$ respectively.

To show (3), we assume $\p_R(M)<\infty$.
Note that $\p_R(M/xM)<\infty$ by (2).
Let $N$ be an $R/xR$-module with $\p_{R/xR}(M/xM,N)<\infty$.
Then $\p_{R/xR}(M/xM,N)=\p_R(M/xM,N)-1$ (see \cite[Lemma 2.6]{AY}).
This yields $\p_{R/xR}(M/xM)=\p_R(M/xM)-1<\infty$.

Conversely, assume $\p_{R/xR}(M/xM)<\infty$.
Let $N$ be an $R$-module with $\p_{R}(M,N)<\infty$.

If $x$ is an $N$-regular element, then we can see $\p_R(M,N)=\p_R(M,N/xN)$ by Nakayama's lemma.
Since $x$ is an $M$-regular element, there are isomorphisms $\Ext^i_R(M,N/xN)\cong \Ext^i_{R/xR}(M/xM,N/xN)$ for all $i \ge 0$.
Thus we have equalities $\p_R(M,N)=\p_R(M,N/xN)=\p_{R/xR}(M/xM,N/xN)=\p_{R/xR}(M/xM)=\depth R/xR-\depth M/xM=\depth R-\depth M$ by Lemma \ref{keylem}.

If $x$ is not an $N$-regular element, then $N$ is not free.
Note that $x$ is an $\Omega N$-regular element.
It comes from above argument, $\p_R(M,\Omega N)=\depth R-\depth M$.
Since $\p_R(M,R)=\depth R-\depth M$, we have $\p_R(M,N)\le \depth R-\depth M$.

Hence we get $\p_R(M)<\infty$.
\end{proof}

Now we can prove the Theorem \ref{abthm}.(1).

\begin{pfp}
(i) $\iff$ (ii): Obvious by definition.

(i)(,(ii)) $\Rightarrow$ (iv) is trivial.

(iv) $\Rightarrow$ (iii): Since $R$ is Gorenstein, we have $k \in \AB$ by Remark \ref{rem}.(4).
Therefore $\AB$ contains all $R$-modules of finite length by the assumption.

(iii) $\Rightarrow$ (ii): Since $\gdim_R k<\infty$, $R$ must be Gorenstein.
Note that every maximal Cohen-Macaulay $R$-module has finite AB-dimension.
Namely, let $X$ be a maximal Cohen-Macaulay $R$-module and let $x_1,x_2,\ldots ,x_t \in \m$ be a maximal $X$-regular sequence.
Since $\abdim_RX/(x_1,x_2,\ldots ,x_t)X<\infty$ by the assumption, we have $\abdim_RX<\infty$ by Proposition \ref{prop}.(2).
Therefore every maximal Cohen-Macaulay $R$-module satisfies (AC).

Let $M$ be an $R$-module which is not maximal Cohen-Macaulay, 
and let $0 \to Y_M \to X_M \to M \to 0$ be a Cohen-Macaulay approximation of $M$.
Since $\abdim_R X_M<\infty$, we have $\abdim_R M<\infty$ by Lemma \ref{ABfin}.
\qed
\end{pfp}

The following is an example which does not satisfy the equality of Theorem \ref{abthm}.(2).(ii).
This example is given by Jorgensen and \c{S}ega \cite{JS}.

\begin{ex}
Let $k$ be a field and let $\alpha \in k$ be a nonzero element.

Put $R=k[x_1,x_2,x_3,x_4]/I$, where $I$ is a ideal of $R$ generated by 

\begin{center}
$\alpha x_1x_3+x_2x_3, x_1x_4+x_2x_4, x_3x_4, x_1^2, x_2^2, x_3^2, x_4^2$
\end{center}

Consider the sequence
$$
{\bf C}:\cdots \overset{d_{i+1}}{\longrightarrow} R^2 \overset{d_{i}}{\longrightarrow} R^2 \overset{d_{i-1}}{\longrightarrow} R^2 \overset{d_{i-2}}{\longrightarrow} \cdots , 
$$
where $d_i$ be a matrix $\left(\begin{smallmatrix}
x_1 & \alpha^i x_3 \\
x_4 & x_2
\end{smallmatrix}\right)$ over $R$.
We put $M=\Coker d_1$.

Jorgensen and \c{S}ega \cite[Lemma 2.2]{JS} prove that ${\bf C}$ is a complete resolution of $M$ and therefore $M$ is totally reflexive.
Furthermore, if $\alpha$ is not a root of unity, then they \cite[Proposition 3.2(b)]{JS} give an $R$-module $T_q$ for every positive integer $q$ such that $\Ext^i_R(M,T_q)\ne 0$ if and only if $i=0,q-1,q$.
This yields that $M$ does not satisfy (AC).
In particular, we have $\abdim_R M=\infty > 0=\gdim_R M$.

On the other hand, we assume that $\alpha$ is a primitive $n$th root of unity.
In this case, $M$ is a periodic module of period $n$ (i.e. $\Omega^nM\cong M$).
For any $N \in M^{\perp}$ and any positive integer $i$, $\Ext^i_R(M,N)\cong\Ext^i_R(\Omega^nM,N)\cong\Ext^{i+n}_R(M,N)\cong\Ext^{i+2n}_R(M,N)\cong \ldots \cong \Ext^{i+jn}_R(M,N)=0$ for $i+jn>\p_R(M,N)$.
Thus we have $\p_R(M,N)=0$ and $\abdim_RM=0$.
\end{ex}

By Theorem \ref{ARC}, we can see that the Auslander condition (AC) is closely related to the Auslander-Reiten conjecture.
Now, we shall show Theorem \ref{ARC}.

\begin{pfa}
Assume that $M$ is not free.
Since there exists a nonsplit exact sequence $0 \to \Omega M \to F_0 \to M \to 0$.
In particular, we see $\Ext^1_R(M,\Omega M) \not= 0$.
On the other hand, we have $\p_R(M,M)=\p_R(M,R)=0$ by assumption and Lemma \ref{keylem}.
Therefore we have $\p_R(M,\Omega M)=0$ by Lemma \ref{keylem}.
In particular, $\Ext^1_R(M,\Omega M) = 0$.
This is contradiction.
Therefore $M$ is free.
\qed
\end{pfa}

In the end of this paper, we give a corollary of this theorem.

\begin{cor}
Let $M$ be an $R$-module which has finite AB-dimension.
If $\Ext^{\gg 0}_R(M,M)=0$,
then $\pd _RM<\infty$.
\end{cor}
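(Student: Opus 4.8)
The plan is to apply Theorem \ref{ARC} to a sufficiently high syzygy of $M$. We may assume $M\neq 0$. Since $\abdim_R M<\infty$, the G-dimension $g:=\gdim_R M$ is finite, so $G:=\Omega^gM$ is totally reflexive and $\Ext^{>g}_R(M,R)=0$. It then suffices to prove that $G$ is free, for this forces $\pd_R M\le g<\infty$.

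The first step is to transport the hypothesis $\Ext^{\gg 0}_R(M,M)=0$ to a two-sided vanishing for $G$. I would use two routine dimension shifts applied to the short exact sequences $0\to\Omega^{i+1}N\to F_i\to\Omega^iN\to 0$ from a minimal free resolution of $N$. First, because $\Ext^{>g}_R(M,F_i)=0$, vanishing of $\Ext^{\gg 0}_R(M,N)$ implies vanishing of $\Ext^{\gg 0}_R(M,\Omega N)$; iterating this $g$ times starting from $N=M$ gives $\Ext^{\gg 0}_R(M,G)=0$. Second, the standard shift in the left argument gives $\Ext^i_R(G,N)\cong\Ext^{i+g}_R(M,N)$ for all $i\ge 1$; taking $N=G$ yields $\Ext^{\gg 0}_R(G,G)=0$, and in particular $\p_R(G,G)<\infty$.

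The second step upgrades this to $\Ext^{>0}_R(G,G)=0$. By Proposition \ref{prop}.(1) applied $g$ times we have $\abdim_R G<\infty$; if $G=0$ we are already done, so assume $G\neq 0$. Then Lemma \ref{keylem} applies to the pair $(G,G)$ and gives $\p_R(G,G)=\depth R-\depth G$, which equals $\gdim_R G=0$ by the Auslander--Bridger formula, since $G$ is totally reflexive. Hence $\Ext^{>0}_R(G,G)=0$, and Theorem \ref{ARC}, applicable because $\abdim_R G<\infty$, shows that $G$ is free. Therefore $\pd_R M\le g<\infty$.

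Everything here reduces to dimension shifting together with Lemma \ref{keylem} and Theorem \ref{ARC}; the only point needing care is the first shift in the right argument, where one must invoke $\gdim_R M<\infty$, equivalently $\Ext^{>g}_R(M,R)=0$, to discard the contributions of the free modules $F_i$. Conceptually, the corollary just records that finiteness of AB-dimension converts ``$\Ext^{\gg 0}$-vanishing'' into ``$\Ext^{>0}$-vanishing'' on a totally reflexive syzygy, and there Theorem \ref{ARC} forces freeness.
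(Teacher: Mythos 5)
Your proof is correct and follows essentially the same route as the paper: pass to the syzygy $\Omega^{t}M$ with $t=\depth R-\depth M=\gdim_RM$, check that $\Ext^{\gg 0}$-vanishing transfers to it (a step the paper leaves as ``one can check'' and you spell out via dimension shifting), then upgrade to $\Ext^{>0}$-vanishing by Lemma \ref{keylem} and conclude freeness from Theorem \ref{ARC}. No gaps; the only difference is that you make the transfer step and the use of the Auslander--Bridger formula explicit.
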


\begin{pf}
We put $t=\depth R-\depth M$ and $M'=\Omega^tM$.
Then we see $\abdim_RM'=0$ by Proposition \ref{prop}.(1) and Theorem \ref{abthm}.(2).(i).
Since we can check that $\Ext^{\gg 0}_R(M',M')=0$,
we have  $\Ext^{> 0}_R(M',M')=0$ by Lemma \ref{keylem}.
Therefore $M'$ is free by Theorem \ref{ARC} and we have $\pd _RM=t<\infty$.
\qed
\end{pf}


%

\end{document}